\newcommand{\Exp}{\mathbb{E}}
\newcommand{\Prob}{\mathbb{P}}
\begin{document}

\title{Occupancy times for time-dependent stage-structured models}

\author{George~Chappelle \and Alan~Hastings \and Martin~Rasmussen}

\authorrunning{George~Chappelle, Alan~Hastings, and Martin~Rasmussen}

\date{\today}

\institute{
George Chappelle, Department of Mathematics, Imperial College London, 180 Queen's Gate, London SW7 2AZ, United Kingdom \and
Alan Hastings, Santa Fe Institute, 1399 Hyde Park Road, Santa Fe, NM 87501, USA, and Department of Environmental Science and Policy, University of California, Davis, CA 95616, USA
\and
Martin Rasmussen, Department of Mathematics, Imperial College London, 180 Queen's Gate, London SW7 2AZ, United Kingdom
}

\maketitle

\begin{abstract}
  During their lifetimes, individuals in populations pass through different states, and the notion of an occupancy time describes the amount of time an individual spends in a given set of states. Questions related to this idea were studied in a recent paper by Roth and Caswell for cases where the environmental conditions are constant. However, it is truly important to consider the case where environments are changing randomly or in directional way through time, so the transition probabilities between different states change over time, motivating the use of time-dependent stage-structured models. 
  
  Using absorbing inhomogenous Markov chains and the discrete-time McKendrick--von F{\"o}rster equation, we derive explicit formulas for the occupancy time, its expectation, and its higher-order moments for stage-structured models with time-dependent transition rates. We apply our approach to study a time-dependent model of the Southern Fulmar, and obtain insights into how the number of breeding attempts depends on external conditions that vary through time.
\end{abstract}

\keywords{Inhomogeneous Markov chain, McKendrick--von F{\"o}rster equation,  Occupancy time, Southern Fulmar.}

\subclass{39A06, 60J10, 60J20}

\section{Introduction}

The study of structured populations in ecology often is focused on organisms with a life history with distinct yearly events.  A seminal contribution to this work was that of Leslie in the 1940's \cite{Leslie45} who represented the dynamics of age-structured populations using a discrete-time model formulated as a vector with entries representing the number of individuals at different, equally spaced, ages, and a matrix representing yearly transitions with the entries in the top row representing births and the sub-diagonal elements representing the year to year survival probabilities.  Similar representations can be used to study spatially structured populations, and more commonly stage-structured populations \cite{Caswell_01_1,Lefkovitch_65_1}. In the latter case, entries represent yearly transition probabilities between different stages in the population.  For some stage-structured populations, individuals transition back and forth between reproductive and nonreproductive states, and of great interest is the overall time individuals spend in reproductive states during their lifetime.  This behavior typically occurs in organisms where successful reproduction represents a substantial expenditure of energy.  Examples of organisms that do these kind of transitions are  plants, with orchids a prominent example \cite{Shefferson_19}, and large birds such as the Southern Fulmar \cite{Jenouvrier_15_1} and the California condor \cite{Meretsky2000} where individuals transition between breeding and nonbreeding states.

Although ecologists typically think of stage-structured modeling approaches as focusing on the numbers of individuals in different states, a different interpretation is possible if we break up the model into two pieces: one representing reproduction, and the other representing transitions among states not involving reproduction.  This separation of the description of reproduction from the description of transitions between states by organisms (see e.g.~\cite{cushing1998}) is a standard approach that is often useful.  Interpretations of these stage-structured models usually assume that the matrix entries representing these transition rates are constant, ignoring both any effects of density dependence or any influence of environmental changes through time. Consequently, the state vector in the submodel that does not include reproduction could equivalently be considered as the probability that a single organism is at a particular state.  The initial condition for this view is not the number of individuals in different states, but the probability distribution for the states that a focal individual starts in.  Another feature is that organisms are not immortal, so an additional state could be included representing death.  

Under this alternate interpretation, the discrete-time model can be viewed as a finite state inhomogeneous absorbing Markov chain. The different states of the Markov chain correspond to reproductive states or physical location. A new view that emerges from this interpretation is that it makes sense to consider the amount of time (number of years) that an organism spends in some subset of the sates in the model.   For example, an organism may move in and out of breeding states, if breeding one year moves the organism to a nonbreeding state the following year. Observations may easily be able to tell if an organism is breeding or non-breeding, or clearly flowering or nonflowering for plants.  Thus it makes sense to consider how much time an individual spends in the breeding state.  More generally, for a given set of states, the amount of time an individual spends in that set over its lifetime is called the occupancy time.

This intriguing view of dynamics of individuals has been explored theoretically in a recent paper by Roth and Caswell \cite{Roth2018}. The key assumption that the probabilities of transitions between different states are constant though time produces a resulting Markov chain that is homogeneous. Then, using ideas from the theory of Markov chains they develop formulas to calculate various quantities of biological interest including the expected occupancy time of an individual in a target set as well as other aspects. 

Yet, environments, and hence transition probabilities between states for an individual, are not constant through time. On shorter time scales, there is important environmental variability, such as El Ninos and the Pacific decadal oscillation, with similar cycles in other locations around the globe.  Perhaps more importantly, global change is leading to dramatic longer term secular changes in the life histories of individual organisms.  Thus, an important feature to include in demographic models for ecological populations is possible changes in the environment through time, representing either short or long term environmental fluctuations. In this case, the resulting demographic models lead to inhomogeneous Markov chains. In this article, we derive formulas for the distribution and moments of occupancy times for inhomogeneous Markov chains which will highlight the role that environmental variability can play. We will also demonstrate how to calculate and interpret these quantities with a specific application, given by breeder states of the Southern Fulmar bird population.

\section{Occupancy times for inhomogeneous Markov chains}\label{sectheory}

The transition probabilities among states for living organisms at any fixed time can be represented by a substochastic matrix, as no individuals are created, but death is possible. Since we allow for changes in these rates through time, we use a sequence of sub-stochastic matrices through time to represent transition probabilities between states for individuals during their lifetime.  Thus,
let $\{B(n)\}_{n\in\mathbb{N}_0}$  be a sequence of column sub-stochastic matrices in $\mathbb{R}^{d\times d}$, i.e.~for all $n\in\mathbb N_0$, all entries of $B(n)$ are non-negative, and for $j\in\{1,\dots,d\}$, we have $\sum_{i=1}^dB_{ij}(n)\le 1$. The matrix element $B_{ij}(n)$ describes the probability an individual moves from state $j$ to state $i$ at the discrete time step from time $n$ to time $n+1$. Thus, the sum of the entries in the columns are less than one because we allow transition to death, which is not explicitly included as a state.  Accordingly, the probabilities describing the evolution from time $m$ to time $n>m$ are given by the elements of the \emph{transition operator} 
\begin{equation}\label{determinsticevolution}
  \Phi(n,m) := B(n-1)\cdots B(m)\,,
\end{equation}
corresponding to the linear nonautonomous difference equation $x_{n+1}=B(n)x_n$, see  \cite{Kloeden_11_2,Poetzsche_10_2}.

Although the question of occupancy times can be analysed in a purely deterministic nonautonomous setting, we aim at a stochastic description in order to make the analysis of statistical quantities more convenient. Let $\{\bar X_n\}_{n \in\mathbb N_0}$ be the corresponding inhomogeneous absorbing Markov chain on the finite state space $S=\{1,\dots, d\}$ that starts in the probability vector $v\in[0,1]^d$, and note that we have
\begin{displaymath}
  \mathbb{P}\{\bar X_{0}=i\}= v_i \quad\mbox{for all } i\in\{1,\dots, d\}
\end{displaymath}
and
\begin{displaymath}
  \mathbb{P}\{\bar X_n = i \,|\, \bar X_m = j\} =\Phi_{ij}(n,m) \quad\mbox{for all } n>m \mbox{ and } i,j\in\{1,\dots,d\}\,.
\end{displaymath}

We now extend our model to include death.  We define the \emph{absorption probabilities} at time $n\in\mathbb N_0$ by
$b(n) = (\operatorname{Id}-B(n))^T\mathds{1}_d$, where $\mathds{1}_d = (1,\dots,1)^T\in\mathbb R^{d}$. It is convenient to explicitly include an additional state representing death, and hence we can extend our absorbing Markov chain $\{\bar X_n\}_{n\in\mathbb N_0}$ to obtain a full Markov chain $\{X_n\}_{n\in\mathbb N}$, which is generated using the  transition matrices \begin{displaymath}
    C(n) := \begin{pmatrix}
    B(n) & 0 \\
    b(n)^T & 1
    \end{pmatrix} \quad\mbox{for all }n\in\mathbb N_0\,.
\end{displaymath}
We note that the Markov chain $\{X_n\}_{n\in\mathbb N}$ has values in  the extended state space $S\cup\{d+1\}$.
We define the \emph{lifetime} of the Markov chain $\{X_n\}_{n\in\mathbb N_0}$ by
\begin{displaymath}
N := \min\big\{ n \in \mathbb N :  X_{n} = d+1\big\}\,.
\end{displaymath}
Note that the lifetime $N$ describes the random time that an individual enters the absorbing state $d+1$, that is the time until death. Its probability distribution can be calculated explicitly. 

\begin{proposition}
The probability distribution of the absorption time is given by 
\begin{equation*}
\mathbb{P}\{ N = n\} = b(n-1)^T \Phi(n-1,0)v \quad \mbox{for all $n\in\mathbb N$}\,.
\end{equation*}
\end{proposition}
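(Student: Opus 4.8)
The plan is to propagate the ``surviving'' sub-distribution of the chain forward in time and then peel off the single transition that carries the individual into the absorbing state. For $n\in\mathbb N_0$ and $j\in S$, set
\begin{displaymath}
  p_n(j):=\mathbb P\{X_n=j,\ N>n\}\,,
\end{displaymath}
the probability that the individual is alive and in state $j$ at time $n$, and write $p_n=(p_n(1),\dots,p_n(d))^T\in[0,1]^d$. Because $X_0$ takes values in $S$ with distribution $v$, we have $p_0=v$; in fact $p_n$ is just the distribution of the absorbed chain $\bar X_n$.

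Next I would establish the recursion $p_{n+1}=B(n)p_n$. Since $d+1$ is absorbing, for $i\in S$ the event $\{X_{n+1}=i\}$ already forces $X_0,\dots,X_{n+1}\in S$, so $p_{n+1}(i)=\mathbb P\{X_{n+1}=i\}$; conditioning on $X_n$ and using the Markov property together with the block form of $C(n)$ (whose top-right block vanishes and whose top-left block is $B(n)$) gives $p_{n+1}(i)=\sum_{j=1}^d B_{ij}(n)\,p_n(j)$. Iterating and recalling the definition \eqref{determinsticevolution} of the transition operator yields $p_n=\Phi(n,0)v$ for every $n\in\mathbb N_0$.

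Finally I would compute the absorption probability by conditioning on the last state before death. For $n\in\mathbb N$ we have $\{N=n\}=\{N>n-1\}\cap\{X_n=d+1\}$, and $\{N>n-1\}=\{X_{n-1}\in S\}$, again because $d+1$ is absorbing. The bottom row of $C(n-1)$ shows $C(n-1)_{d+1,j}=b(n-1)_j$ for $j\in S$, so the law of total probability and the Markov property give
\begin{displaymath}
  \mathbb P\{N=n\}=\sum_{j=1}^d p_{n-1}(j)\,b(n-1)_j=b(n-1)^T p_{n-1}=b(n-1)^T\Phi(n-1,0)v\,,
\end{displaymath}
where the last equality uses the formula for $p_{n-1}$ from the previous step; for $n=1$ this reduces to $b(0)^T v$, consistent with $\Phi(0,0)=\operatorname{Id}$.

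I do not anticipate a genuine difficulty: the statement is essentially careful bookkeeping. The one point that needs a moment's care is the repeated use of the fact that, because $d+1$ is absorbing, being alive up to time $n$ imposes no constraint beyond $X_n\neq d+1$ — this is what lets me drop the event $\{N>n\}$ when convenient and identify $\{N>n-1\}$ with $\{X_{n-1}\in S\}$. The other place to be vigilant is the off-by-one indexing, i.e.\ that the distribution of the live chain at time $n-1$ is $\Phi(n-1,0)v$ and the hazard of absorption over the step from $n-1$ to $n$ is $b(n-1)$.
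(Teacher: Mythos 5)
Your proof is correct and follows essentially the same route as the paper: both decompose $\{N=n\}$ by conditioning on the last live state $X_{n-1}=j$ and use the absorption probabilities $b(n-1)$ together with $\mathbb{P}\{X_{n-1}=j\}=(\Phi(n-1,0)v)_j$. The only difference is that you re-derive the identity $p_{n}=\Phi(n,0)v$ from the recursion $p_{n+1}=B(n)p_n$, whereas the paper takes this as part of the definition of the chain $\{\bar X_n\}$.
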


\begin{proof}
For all $n\in\mathbb N$, we have
\begin{align*}
\Prob\{N=n\} &= \sum_{j=1}^d \Prob\{X_{n}=d+1, X_{n-1}=j\}  \\
&=\sum_{j=1}^d\Prob\{ X_{n}=d+1\,|\, X_{n-1}=j\}\Prob\{X_{n-1}=j\} \\
&= \sum_{j=1}^d b_j(n-1) (\Phi(n-1,0)v)_j = b(n-1)^T\Phi(n-1,0)v\,,
\end{align*}
which finishes the proof of this proposition.
\end{proof}

We are now interested in the random amount of time an individual spends in a subset, such as the breeding state, of the state space consisting of all possible individual states, up to a certain time.
\begin{definition}
For $R\subset S$, we define the \emph{$R$-occupancy time} at time $n$ by 
\begin{equation*}
A_{R}(n) = \#\big\{k\in\{0,\dots, n-1\} : X_k \in R \big\}\quad \mbox{for all } n\in\mathbb N\,,
\end{equation*}
where $\#$ denotes the number of elements of the set, and we define $A_R(0)=0$. The \emph{lifetime $R$-occupancy time} 
\begin{displaymath}
  \tau_{ R}=A_{ R}(N)
\end{displaymath}
is  the amount of time a individual spends in $R$ over its lifetime. 
\end{definition}
In order to compute the distribution of the lifetime $R$-occupancy time and its moments, it is useful to consider the joint distribution of the $R$-occupancy time and the state of the Markov chain at time $n$, given by 
\begin{equation}\label{pjdef}
  p^{R}_j(a,n) :=  \mathbb{P}\{ A_{R}(n)=a, X_n=j\} \quad \mbox{for all } j\in S\,,\, n\in\mathbb N_0 \mbox{ and } a\in\mathbb N_0\,.
\end{equation}

We note that this is not a probability distribution since there is a nonzero probability of death. In fact, the average mass available at time $n\in\mathbb N$ is given by $\sum_{j=1}^d \sum_{a=0}^{\infty} p_j^{R}(a,n) = (1,\dots, 1) \Phi(n,0)v$, which is less than $1$ in general.

The following proposition shows that this joint distribution evolves according to a partial difference equation, which is a generalisation of the discretised McKendrick--von F{\"o}rster equation for age dependent population dynamics \cite{McKendrick_26_1}. 

\begin{proposition}\label{prop1}
The time evolution of the distribution $p^{R}_j(a,n)$  is governed by the partial difference equation
\begin{equation}
p^{R}_j(a,n+1) = \sum_{k\not\in R} B_{jk}(n)p_k^{ R}(a,n) + \sum_{k\in R} B_{jk}(n)p_k^{ R}(a-1,n) \,,
\label{eq: dist_ev}
\end{equation}
for any $a\in \mathbb N$ and $n\in \mathbb N_0$,
with initial conditions
\begin{equation*}
p_j^{ R}(a,0) = \left\{
      \begin{array}{c@{\quad:\quad}l}
       v_j & a=0  \\
       0 & a>0 
      \end{array}
      \right. \quad \mbox{for all } j\in S\,,
\end{equation*}
and boundary conditions governed by
\begin{displaymath}
p_j^{ R}(0,n+1) = \sum_{k\not\in R}B_{jk}(n)p_k^R(0,n) \quad \text{ for all } j\in S \mbox{ and } n\in\mathbb N_0\,.
\end{displaymath}
\end{proposition}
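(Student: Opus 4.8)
The plan is to derive the partial difference equation \eqref{eq: dist_ev} by conditioning on the state $X_n$ of the chain at time $n$ and invoking the Markov property, in the same spirit as the proof of the preceding proposition on the absorption time. The starting point is the pathwise identity
\begin{displaymath}
  A_R(n+1) = A_R(n) + \mathds{1}_{\{X_n\in R\}}\,,
\end{displaymath}
which is immediate from the definition of $A_R(n)$ as the number of indices $k\in\{0,\dots,n-1\}$ with $X_k\in R$. Consequently, for $a\in\mathbb N$ the event $\{A_R(n+1)=a\}$ is the disjoint union of $\{A_R(n)=a,\,X_n\notin R\}$ and $\{A_R(n)=a-1,\,X_n\in R\}$.

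First I would write, for $j\in S$, $n\in\mathbb N_0$ and $a\in\mathbb N$,
\begin{align*}
  p_j^R(a,n+1)
  &= \Prob\{A_R(n+1)=a,\,X_{n+1}=j\}\\
  &= \sum_{k\notin R}\Prob\{A_R(n)=a,\,X_n=k,\,X_{n+1}=j\}
     + \sum_{k\in R}\Prob\{A_R(n)=a-1,\,X_n=k,\,X_{n+1}=j\}\,,
\end{align*}
where in both sums $k$ ranges over $S=\{1,\dots,d\}$ (on which $p_k^R$ is defined via \eqref{pjdef}): the event $\{X_{n+1}=j\}$ with $j\le d$ rules out $X_n=d+1$, since $d+1$ is absorbing for the chain generated by the matrices $C(n)$. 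Since $A_R(n)$ depends only on $X_0,\dots,X_{n-1}$, the event $\{A_R(n)=a',\,X_n=k\}$ is measurable with respect to the history of the chain up to time $n$, so the Markov property gives
\begin{displaymath}
  \Prob\{A_R(n)=a',\,X_n=k,\,X_{n+1}=j\} = \Prob\{X_{n+1}=j\mid X_n=k\}\,p_k^R(a',n) = B_{jk}(n)\,p_k^R(a',n)
\end{displaymath}
for $j,k\in S$, using $C_{jk}(n)=B_{jk}(n)$ there (and noting the identity holds trivially when $p_k^R(a',n)=0$). Substituting this into the two sums, with $a'=a$ in the first and $a'=a-1$ in the second, produces exactly \eqref{eq: dist_ev}.

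It then remains to verify the initial and boundary data, which are essentially special cases of the same argument. Since $A_R(0)=0$ by definition, the event $\{A_R(0)=a\}$ is the whole sample space when $a=0$ and empty when $a>0$, whence $p_j^R(0,0)=\Prob\{X_0=j\}=v_j$ and $p_j^R(a,0)=0$ for $a>0$. For the boundary condition one repeats the computation above with $a=0$: now $A_R(n+1)=0$ forces $X_n\notin R$, so only the first sum survives and $p_j^R(0,n+1)=\sum_{k\notin R}B_{jk}(n)p_k^R(0,n)$; equivalently this is \eqref{eq: dist_ev} at $a=0$ read with the convention $p_k^R(-1,n)=0$. The only point that needs a little care is the Markov-property step — the measurability of $\{A_R(n)=a',X_n=k\}$ with respect to $\sigma(X_0,\dots,X_n)$ and the treatment of null events — which is why I would keep everything expressed through joint probabilities rather than conditional ones; the remainder is bookkeeping.
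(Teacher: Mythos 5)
Your proposal is correct and follows essentially the same route as the paper's proof: decompose the event $\{A_R(n+1)=a,\,X_{n+1}=j\}$ according to whether $X_n\in R$ or not, use the Markov property (conditional independence of $X_{n+1}$ and $A_R(n)$ given $X_n$) to factor out $B_{jk}(n)$, and treat the initial and boundary conditions as the degenerate cases $n=0$ and $a=0$. Your version simply makes explicit some points the paper leaves implicit (the pathwise identity $A_R(n+1)=A_R(n)+\mathds{1}_{\{X_n\in R\}}$, the measurability justifying the Markov step, and the exclusion of the absorbing state), which is fine but not a different argument.
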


\begin{proof}
For the evolution equation, we have for all $a\in\mathbb N$ and $n\in\mathbb N_0$ that
\begin{align*}
    p_j^{ R}(a,n+1) &= \Prob\{A_{ R}(n+1)=a, \, X_{n+1}=j\} \\
    &=\Prob\{ A_{ R}(n+1)=a,\, X_{n+1}=j,\, X_n\in R\} \\
    &\qquad+ \Prob\{ A_{ R}(n+1)=a,\, X_{n+1}=j,\, X_n\not\in R\} \\
    &=\Prob\{ A_{ R}(n)=a-1,\, X_{n+1}=j,\, X_n\in R\} \\
    &\qquad+\Prob\{ A_{ R}(n)=a,\, X_{n+1}=j,\, X_n\not\in R\} \\
    &=\sum_{k\in R}\Prob\{X_{n+1}=j\,| X_n=k\}\Prob\{A_{ R}(n)=a-1,\, X_n=k\}  \\
    &\qquad +\sum_{k\not\in R}\Prob\{X_{n+1}=j\,| X_n=k\}\Prob\{A_{ R}(n)=a,\, X_n=k\} \\
    &= \sum_{k\in R} B_{jk}(n)p_k^{ R}(a-1,n) + \sum_{k\not\in R} B_{jk}(n)p_k^{ R}(a,n)\,,
\end{align*}
which proves \eqref{eq: dist_ev}. The initial conditions are satisfied by definition, and the validity of the boundary conditions follows from 
\begin{align*}
    p_j^{ R}(0,n+1) &= \Prob\{A_{ R}(n+1)=0, \, X_{n+1}=j\}\\
    &=\Prob\{ A_{ R}(n+1)=0,\, X_{n+1}=j,\, X_n\not\in R\} \\
    &=\Prob\{ A_{ R}(n)=0,\, X_{n+1}=j,\, X_n\not\in R\} \\
    &= \sum_{k\not\in R}\Prob\{X_{n+1}=j\,| X_n=k\}\Prob\{A_{ R}(n)=0,\, X_n=k\} \\
    &= \sum_{k\not\in R} B_{jk}(n)p_k^{ R}(0,n)\,,
\end{align*}
for all $j\in S$ and $n\in\mathbb N$.
\end{proof}

We define $p^{ R}(a,n):=(p_1^{ R}(a,n),\dots,p_d^{ R}(a,n))^T$ for all $a,n\in\mathbb N_0$. Note that one can write equation \eqref{eq: dist_ev} more compactly in matrix form
\begin{displaymath}
  p^{R}(a,n+1) = B(n)\big(R p^{ R}(a-1,n)+ (\operatorname{Id}-R)p^{ R}(a,n)\big)\,
\end{displaymath}
where we have allowed $R$ to also stand for the diagonal binary matrix describing the subset $R\subset S$, defined by
\begin{equation*}
     R_{ij} = \left\{
      \begin{array}{c@{\quad:\quad}l}
       1 & i=j\in  R \\
       0 & \text{otherwise}
      \end{array}
      \right. \quad \mbox{for all } i,j\in S\,.
\end{equation*} 

The following theorem explains how to calculate the distribution and moments of the lifetime $R$-occupancy time $\tau_R$.

\begin{theorem}\label{theo1} For any $R\subset S$, the distribution of the lifetime $R$-occupancy time $\tau_R$ is given by 
\begin{equation}
  \Prob\{\tau_{ R}=a\}= \sum_{n=0}^{\infty} b(n)^T \left(Rp^{ R}(a-1,n) + (\operatorname{Id}-R)p^R(a,n)\right)        \label{eq:distribution}
\end{equation}
for all $a\in\mathbb N_0$. The moments of $\tau_{R}$ are given by
\begin{equation}
  \mathbb{E}(\tau_{ R}^k)=\sum_{n=0}^{\infty}b(n)^T\left( m_k^{ R}(n) + R\sum_{j=1}^k {k\choose j}m_{k-j}^{ R}(n)\right) \quad \mbox{for all } k\in\mathbb N\,,
  \label{eq:moments}
\end{equation}
where $m^{R}_k(n) = \sum_{a=1}^{n}a^k p^{ R}(a,n)\in\mathbb{R}^d$ for all $k\in\mathbb N_0$. We demonstrate how to efficiently calculate the terms $m_k^R(n)$ below in Proposition~\ref{prop2}.
\end{theorem}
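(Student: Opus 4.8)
The plan is to derive both formulas from the joint distribution $p^R_j(a,n)$ by conditioning on the time of death, i.e.\ on the value of the lifetime $N$. The key observation is that $\tau_R = A_R(N)$, and that on the event $\{N = n+1\}$ the individual is alive and in some state $k \in S$ at time $n$, then transitions to death; the occupancy time accrued over the lifetime is then $A_R(n)$ if $k \notin R$, or $A_R(n)+1$ if $k \in R$ (the final living step in a target state adds one). First I would write
\begin{displaymath}
  \Prob\{\tau_R = a\} = \sum_{n=0}^\infty \Prob\{A_R(N) = a,\ N = n+1\}
\end{displaymath}
and expand the summand by conditioning on $X_n = k$ for $k \in S$ and using $\Prob\{X_{n+1} = d+1 \mid X_n = k\} = b_k(n)$. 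Splitting the sum over $k$ into $k \in R$ and $k \notin R$ and matching against the definition $p^R_k(a,n) = \Prob\{A_R(n) = a, X_n = k\}$ yields exactly the two terms $b(n)^T R\, p^R(a-1,n)$ and $b(n)^T(\operatorname{Id}-R)p^R(a,n)$ in \eqref{eq:distribution}; one should check the edge case $a = 0$, where the first term vanishes since $p^R(-1,n)$ is interpreted as zero.

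For the moments, I would start from $\Exp(\tau_R^k) = \sum_{a=0}^\infty a^k \Prob\{\tau_R = a\}$, substitute \eqref{eq:distribution}, and exchange the (nonnegative) sums over $a$ and $n$. This leaves $\sum_n b(n)^T \big[\sum_a a^k R\, p^R(a-1,n) + \sum_a a^k (\operatorname{Id}-R)p^R(a,n)\big]$. The second inner sum is $(\operatorname{Id}-R)\,m_k^R(n)$ directly from the definition $m_k^R(n) = \sum_{a\ge 1} a^k p^R(a,n)$ (the $a=0$ term contributes nothing). For the first inner sum, reindex $a \mapsto a+1$ to get $\sum_{a\ge 0}(a+1)^k R\, p^R(a,n)$, then expand $(a+1)^k = \sum_{j=0}^k \binom{k}{j} a^{k-j}$ by the binomial theorem and interchange the finite sum over $j$ with the sum over $a$. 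The $j=0$ term gives $R\, m_k^R(n)$, which combines with a piece of the second sum; collecting everything produces $b(n)^T\big(m_k^R(n) + R\sum_{j=1}^k \binom{k}{j} m_{k-j}^R(n)\big)$, matching \eqref{eq:moments}. Note $m_0^R(n) = \sum_{a\ge 1} p^R(a,n)$, which is consistent with the $k-j$ index reaching $0$.

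The only genuine subtlety — and the step I would be most careful about — is the bookkeeping at the boundary $a=0$ together with the interchange of infinite summations. The interchange is justified since every term $p^R_j(a,n)$ is nonnegative (Tonelli), and the total mass $\sum_{a,n}$ is finite because $\sum_n \Prob\{N = n+1\} \le 1$; but one must make sure the "shift" in the $R$-part ($a-1$ versus $a$) is handled consistently with the convention $p^R(a,n) = 0$ for $a < 0$ and with the fact that accumulating a visit to $R$ at the penultimate living time is what produces the off-by-one. Everything else is the McKendrick--von F\"orster structure of Proposition~\ref{prop1} feeding into a clean conditioning argument, so no further obstacle is expected; the efficient computation of the auxiliary vectors $m_k^R(n)$ is deferred to Proposition~\ref{prop2}.
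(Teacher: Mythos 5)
Your proposal is correct and follows essentially the same route as the paper: the distribution is obtained by summing over the death time and the last living state, using the Markov property to factor in $b_k(n)$ and the split $k\in R$ versus $k\notin R$ to produce the shift $a\mapsto a-1$, and the moments follow by substituting this formula, interchanging the sums, reindexing, and applying the binomial theorem. Your explicit handling of the $a=0$ edge case and the Tonelli justification are fine additions but do not change the argument.
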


\begin{proof} For all $a\in\mathbb N$, we have
\begin{align*}
    \Prob\{\tau_{ R}&=a\} = \Prob\{ A_{ R}(N)=a\} = \sum_{n=1}^{\infty} \Prob\{A_{ R}(n)=a, N=n\} \\
    &= \sum_{n=1}^{\infty} \sum_{j=1}^d \Prob\{A_{ R}(n)=a, X_{n-1}=j, X_{n}=d+1\} \\
    &= \sum_{n=1}^{\infty} \sum_{j=1}^d \Prob\{X_{n}=d+1\, | \, X_{n-1}=j\}\Prob\{A_{ R}(n)=a, X_{n-1}=j\} \\ 
    &= \sum_{n=1}^{\infty}\Big(\sum_{j\in R}\Prob\{X_n=d+1\,|\,X_{n-1}=j\}\Prob\{A_R(n-1)=a-1,X_{n-1}=j\} \\
    &\qquad+ \sum_{j\not\in R}\Prob\{X_n=d+1\,|\,X_{n-1}=j\}\Prob\{A_R(n-1)=a, X_{n-1}=j\}\Big) \\
    &=\sum_{n=0}^{\infty}b(n)^T\left(Rp(a-1,n)+(\operatorname{Id}-R)p(a,n)\right).
\end{align*}
Note that from the second to the third line, we used conditional independence of the random variables $X_{n}$ and $A_R(n)$ given $X_{n-1}$. This proves \eqref{eq:distribution}. To prove \eqref{eq:moments} we observe
\begin{align*}
    \mathbb{E}(\tau_R^k)&=\sum_{a=1}^{\infty}a^k\Prob\{\tau_R=a\} \\
    &= \sum_{a=1}^{\infty}a^k\sum_{n=0}^{\infty}b(n)^T(Rp(a-1,n)+(\operatorname{Id}-R)p(a,n)) \\
    &= \sum_{n=0}^{\infty}b(n)^T\sum_{a=0}^{\infty}(a+1)^k(Rp(a,n)+(\operatorname{Id}-R)p(a+1,n)) \\
    &=\sum_{n=0}^{\infty}b(n)^T\left( m_k^{ R}(n) + R\sum_{j=1}^k {k\choose j}m_{k-j}^{ R}(n)\right)\, .
\end{align*}
\end{proof}

We now demonstrate how to calculate the terms $m^{R}_k(n) = \sum_{a=1}^{n}a^k p^{ R}(a,n)$ without having to solve equation \eqref{eq: dist_ev}. Fundamental is the observation that for a fixed $k\in\mathbb N$, the sequence $n\mapsto m^R_k(n)$ solves a difference equation, involving also terms $m^R_\ell(n)$, where $\ell<k$. 

\begin{proposition}\label{prop2}
Consider $R\subset S$. Then the sequence $n\mapsto m_0^R(n)$ satisfies the difference equation
\begin{equation*}
    m_0^R(n+1) = B(n) m_0^R(n)\,,
\end{equation*}
For  $k\in\mathbb N$, the sequence $n\mapsto m_k^{ R}(n)$ satisfies the difference equation
\begin{equation*}
    m_k^{ R}(n+1) = B(n)\left( m_k^{ R}(n) + R\sum_{j=1}^k {k\choose j}m_{k-j}^{ R}(n)\right)\,.
\end{equation*}
The initial conditions are given by $m_0^R(0)=v$ and  $m_k^R(0)=0$ for all $k\in\mathbb N$.
\end{proposition}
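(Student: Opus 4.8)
The plan is to obtain each difference equation directly from the partial difference equation \eqref{eq: dist_ev} (in the matrix form displayed just before the proposition) by multiplying through by $a^k$ and summing over $a$. A preliminary observation that keeps the bookkeeping clean is that $p^R(a,n)=0$ whenever $a>n$, since the occupancy time $A_R(n)$ never exceeds $n$; this follows by induction on $n$ from the initial data $p^R(a,0)=0$ for $a\ge 1$ together with \eqref{eq: dist_ev}, and it makes every sum below finite (so interchanging summations is automatic). It is also convenient to read the $k=0$ case as $m_0^R(n)=\sum_{a\ge 0}p^R(a,n)$, i.e.\ with the $a=0$ term included and the convention $0^0=1$; with this reading $m_0^R(n)$ is just the vector of survival probabilities, and the statements for $k=0$ and $k\in\mathbb N$ become consistent.

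For $k=0$, I would sum $p^R(a,n+1)=B(n)\big(Rp^R(a-1,n)+(\operatorname{Id}-R)p^R(a,n)\big)$ over $a\ge 1$ and add the $a=0$ boundary identity $p^R(0,n+1)=B(n)(\operatorname{Id}-R)p^R(0,n)$, which is needed because \eqref{eq: dist_ev} is only valid for $a\ge 1$. After the index shift $a\mapsto a-1$ in the term carrying $R$, the right-hand side collapses to $B(n)\big[(\operatorname{Id}-R)\sum_{a\ge 0}p^R(a,n)+R\sum_{a\ge 0}p^R(a,n)\big]=B(n)m_0^R(n)$. The initial condition is $m_0^R(0)=\sum_{a\ge 0}p^R(a,0)=p^R(0,0)=v$ from Proposition~\ref{prop1}.

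For $k\in\mathbb N$ the $a=0$ term contributes nothing, so I would multiply \eqref{eq: dist_ev} by $a^k$ and sum over $a\ge 1$. The $(\operatorname{Id}-R)$ part gives $B(n)(\operatorname{Id}-R)m_k^R(n)$ at once. In the $R$ part I would substitute $b=a-1$ and expand $(b+1)^k=\sum_{j=0}^k\binom{k}{j}b^{k-j}$, then interchange the finite sums to get $\sum_{j=0}^k\binom{k}{j}\sum_{b\ge 0}b^{k-j}p^R(b,n)=\sum_{j=0}^k\binom{k}{j}m_{k-j}^R(n)=m_k^R(n)+\sum_{j=1}^k\binom{k}{j}m_{k-j}^R(n)$, where the $j=k$ summand is exactly $m_0^R(n)$. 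Adding the two parts, the copies of $B(n)Rm_k^R(n)$ and $B(n)(\operatorname{Id}-R)m_k^R(n)$ recombine into $B(n)m_k^R(n)$, leaving $m_k^R(n+1)=B(n)\big(m_k^R(n)+R\sum_{j=1}^k\binom{k}{j}m_{k-j}^R(n)\big)$. Finally $m_k^R(0)=\sum_{a\ge 1}a^k p^R(a,0)=0$ since $p^R(a,0)=0$ for $a\ge 1$.

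The argument is essentially routine manipulation of sums; the only points requiring attention are handling the $a=0$ boundary term separately (since \eqref{eq: dist_ev} fails there), correctly combining the index shift $a\mapsto a-1$ with the binomial expansion of $(b+1)^k$, and tracking the convention $0^0=1$ so that the $j=k$ term of the binomial sum is identified with $m_0^R(n)$ — this identification is the single place where the $k=0$ and $k\ge 1$ statements must be reconciled. No convergence questions arise because $p^R(a,n)$ is supported on $0\le a\le n$.
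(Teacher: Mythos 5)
Your proposal is correct and follows essentially the same route as the paper: for $k\in\mathbb N$ the paper likewise multiplies the matrix form of \eqref{eq: dist_ev} by $a^k$, sums over $a$, shifts the index and expands $(a+1)^k$ binomially, arriving at the same recursion. The only (minor) differences are that the paper disposes of the $k=0$ case by identifying $m_0^R(n)$ with the survival-probability vector $\Phi(n,0)v$ rather than summing the recursion together with the boundary identity as you do, and that your explicit remarks on the $a=0$ term, the convention $0^0=1$, and the finite support $p^R(a,n)=0$ for $a>n$ make precise points the paper leaves implicit.
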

\begin{proof}
Firstly,  the definition of $m_0^R$ and \eqref{pjdef} imply that $m_0^R(n) = \Phi(n-1,0)v$, which proves that it satisfies the above difference equation. Next, for $k\in\mathbb N$, we use the matrix form 
\begin{displaymath}
  p^{R}(a,n+1) = B(n)\big(R p^{ R}(a-1,n)+ (\operatorname{Id}-R)p^{ R}(a,n)\big)\,
\end{displaymath}
of \eqref{eq: dist_ev} and obtain
\begin{align*}
    m_k^{R}(n+1)&=\sum_{a=1}^{\infty}a^k p^{R}(a,n+1) \\ &=\sum_{a=1}^{\infty}a^kB(n)\big( R p^{R}(a-1,n)+ (\operatorname{Id}-R)p^{ R}(a,n)\big)\\
    &= B(n)\left(R\sum_{a=0}^{\infty}(a+1)^k p^{R}(a,n)+ (\operatorname{Id}-R)\sum_{a=0}^{\infty}a^k p^{ R}(a,n) \right) \\
    &=B(n)\left( \sum_{a=0}^{\infty}a^k p^{R}(a,n)+  \sum_{j=1}^k{k\choose j}R\sum_{a=0}^{\infty} a^{k-j}p^{R}(a,n) \right) \\ 
   &= B(n)\left( m_k^{ R}(n) + R\sum_{j=1}^k {k\choose j}m_{k-j}^{ R}(n)\right)\,.
\end{align*}
\end{proof}

We close this section by a short discussion how the analysis is simplified when looking at certain special cases.

We first consider the special case of determining the future life of an organism, $R=S$. If we assume that $X_n=j$ for some $n\in\mathbb N_0$ and $j\in S$, then this implies that $A_S(n)=n$, and hence, $p_j^S(n,n)=\mathbb P\{X_n = j\} = \Phi(n,0)v$, and we have $p_j^S(a,n)=0$ whenever $a\not=n$. A consequence of this is that the partial difference equation \eqref{eq: dist_ev} from Proposition~\ref{prop1} reduced to a (non-partial) difference equation of the form 
\begin{displaymath}
  p^S(n+1,n+1) = B(n)p^S(n,n)\quad \mbox{for all } n\in\mathbb N_0\,,
\end{displaymath}
with $p^S(0,0)=v$. Formula \eqref{eq:distribution} in Theorem~\ref{theo1} then simplifies to
\begin{displaymath}
  \mathbb P\{\tau_S = a\} = b(a-1)^Tp^S(a-1,a-1) = b(a-1)\Phi(a-1,0)v\,,
\end{displaymath}
and the formulas in Proposition~\ref{prop2} simplify slightly, since the matrix $R$ is the identity matrix and can be removed.

Now consider the special case that the transition probabilities between stages are constant through time, so the Markov chain is homogeneous, which has been treated in Roth and Caswell \cite{Roth2018} using a different approach. Since the matrix $B(n)$ and the vector $b(n)$ do not depend on $n\in\mathbb N_0$ in this case, the probability distribution of the absorption time is given in this special case by
\begin{displaymath}
\mathbb{P}\{N=n\} = b^TB^{n-1}v\,,
\end{displaymath}
which is known as a phase-type distribution \cite{Neuts_81_1}. The formulas in Theorem~\ref{theo1} and Proposition~\ref{prop2} do not simplify, except that the matrix $B$ and the vector $b$ do not depend on time.

Finally, in case the Markov chain is homogeneous and $R=S$, the above discrete-time phase-type distribution is also the distribution of the occupancy time $\tau_S$, and respective quantities for expectation and higher-order moments can be obtained more easily from \cite{Neuts_81_1}; see also \cite{Metzler_18_1}, where the continuous-time case is treated via absorbing Markov processes.

\section{Occupancy times in breeder stages of the Southern Fulmar}
We  illustrate our theory of occupancy times for inhomogeneous Markov chains using a stage-structured model describing the life cycle of the Southern Fulmar. The Southern Fulmar is a sea bird with colonies along the coast of Antarctica. There are four states that the bird can be in, the structure of the model is shown in Figure \ref{fig:graph}. 

\begin{figure}[h]
    \begin{center}
         \includegraphics[width=.8\textwidth]{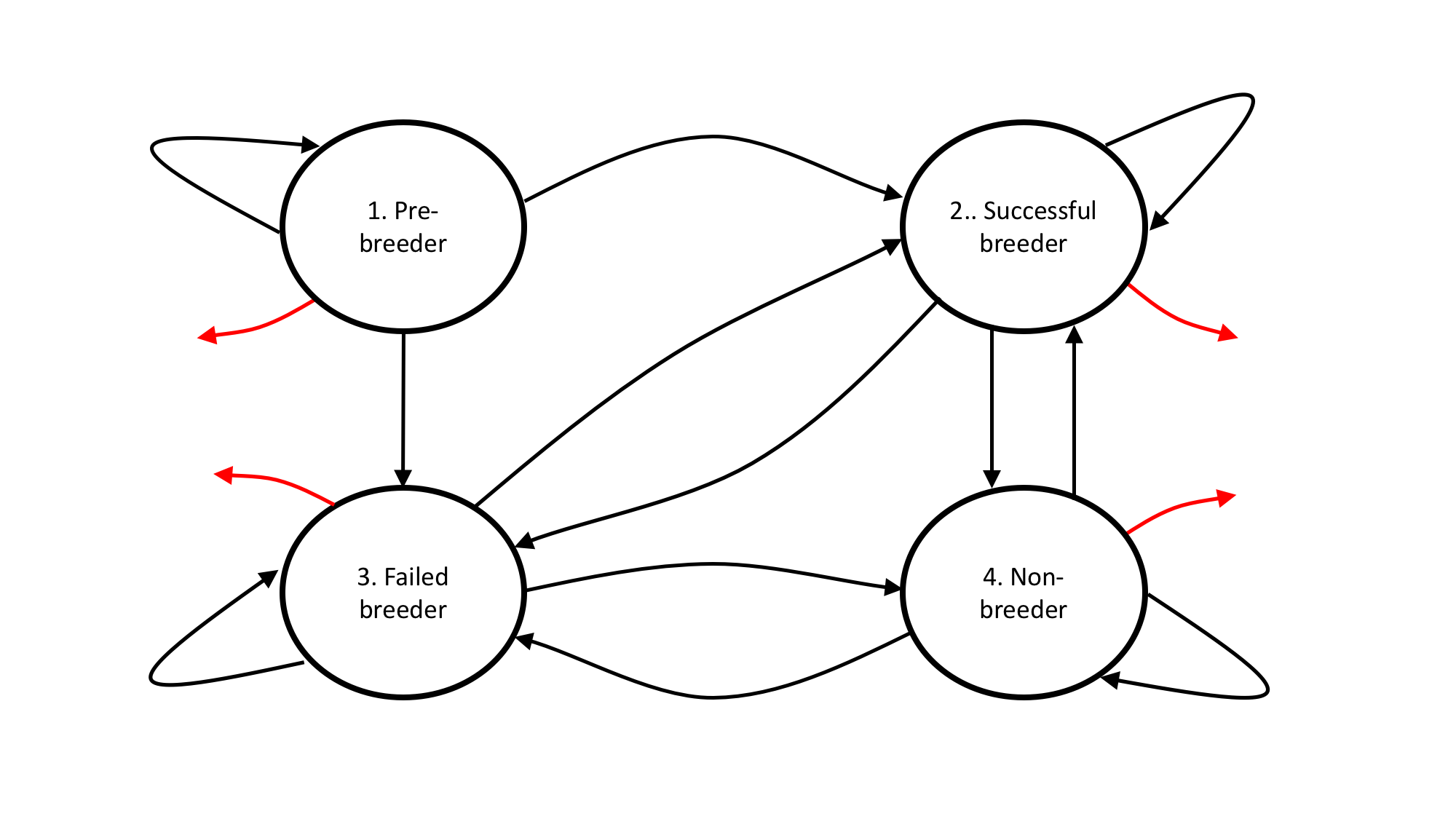}
        \caption{Shows the four states of the Southern Fulmar model and how they are connected. Pre-breeder refers to birds which have yet to bred for the first time. Successful breeders have raised a chick that year. Failed breeders have found a mate but either failed to hatch an egg or the chick died. Finally non-breeders refers to birds who have previously bred but have no mate that year.}
        \label{fig:graph}
    \end{center}
\end{figure}

The rates of transition between these four states depend on the ice conditions in that year. In Jenouvrier,  P{\'e}ron, and Weimerskirch~\cite{Jenouvrier_15_1}, the years are classified as favourable, unfavourable or ordinary. In favourable years the colony is close to the edge of the sea ice. This means foraging trips are shorter and more likely to be successful. 

In Subsection~\ref{subsecdet}, we first consider deterministic, but time-dependent, rates that describe transitions between these four states, and in Subsection~\ref{subsecrand}, we use a random model to describe changes in the external environment.

\subsection{Deterministic external environments}\label{subsecdet}

In \cite{Jenouvrier_15_1}, the transition rates between the four different states have been obtained using satellite data between 1979 and 2010. This involved classification of the three different ice conditions (favourable, ordinary, unfavourable), see Figure~\ref{fig:years}, and three different transition matrices ($U_f$, $U_o$, $U_u$) corresponding to the possible ice conditions:
\begin{displaymath}
U_f = \begin{pmatrix}
0.828 & 0 & 0 & 0 \\
0.06624 & 0.72912 & 0.62244 & 0.40176 \\
0.02576 & 0.18228 & 0.24206 & 0.15624 \\
0 & 0.0186 & 0.0455 & 0.342
\end{pmatrix}\,,
\end{displaymath}
\begin{displaymath}
U_o = \begin{pmatrix}
0.9016 & 0 & 0 & 0 \\
0.011408 & 0.66737 & 0.49312 & 0.1809 \\
0.006992 & 0.18823 & 0.24288 & 0.0891 \\
0 & 0.0744 & 0.184 & 0.63
\end{pmatrix}\,,
\end{displaymath}
\begin{displaymath}
U_u = \begin{pmatrix}
0.9154 & 0 & 0 & 0 \\
0.002392 & 0.4873 & 0.25147 & 0.0468 \\
0.002208 & 0.1895 & 0.23213 & 0.0432 \\
0 & 0.2632 & 0.4464 & 0.81
\end{pmatrix}\,.
\end{displaymath}

\begin{figure}[h]
    \begin{center}
         \includegraphics[width=.8\textwidth]{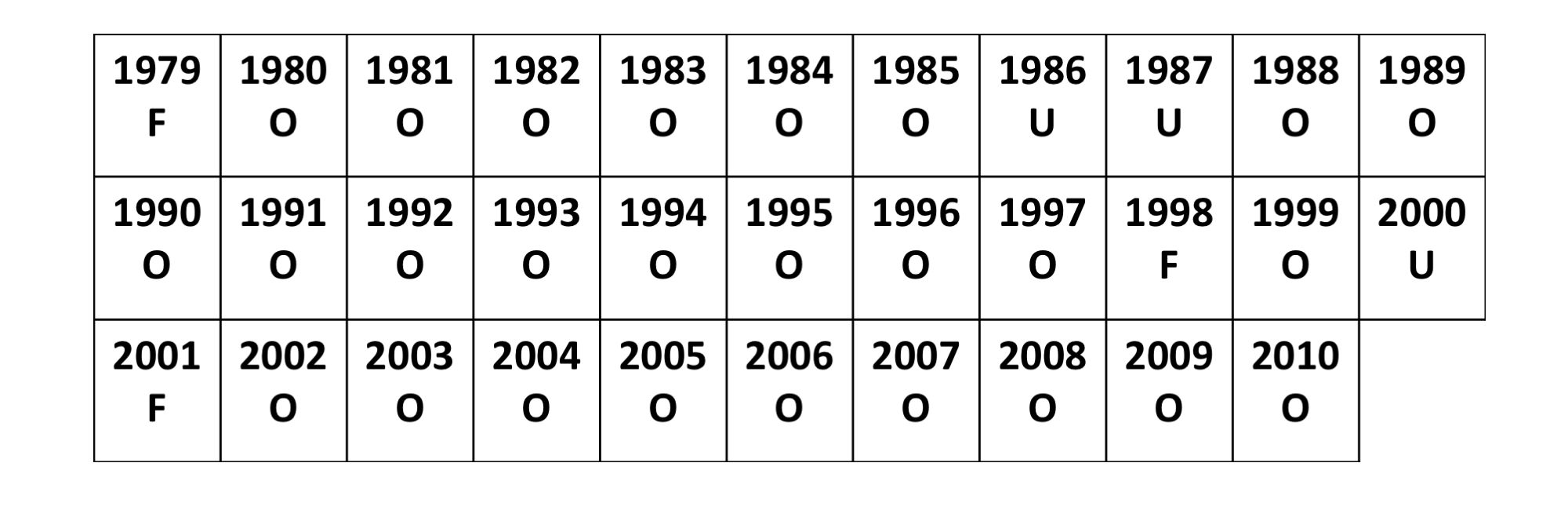}
        \caption{Shows the sequence of ice conditions between 1979 and 2010. The favorability of a year is determined by the distance between the ice edge and the colony, and the total sea ice area.}
        \label{fig:years}
    \end{center}
\end{figure}

We are interested in how many years of a bird's life it attempts to breed. Note that a breeding attempt occurs if the bird is either in the Successful Breeder or Failed Breeder state. This means that, using the notation of the previous section, we are interested in the set  $R=\{2,3\}$ and the lifetime $R$-occupancy time $\tau_R$ is the number of breeding attempts a bird makes in its life. In Figure \ref{fig:tau_R} we use equation \eqref{eq:moments} to calculate the expected value and coefficient of variation for $\tau_R$. In Figure~\ref{fig:tau_R2}, we use equation \eqref{eq:distribution} to calculate the distribution of $\tau_R$ for the four scenarios. 

\begin{figure}[h]
\begin{center} 
\subfigure[]{
\includegraphics[width=0.47\textwidth]{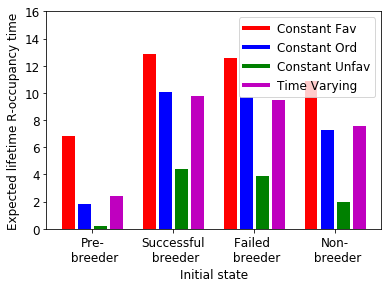}
\label{subfigure:Etau_R}
}
\subfigure[]{
\includegraphics[width=0.47\textwidth]{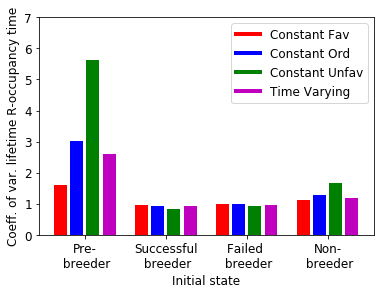}
\label{subfigure:CVtau_R}
}
\end{center}
\caption{Compares the expected valued \subref{subfigure:Etau_R} and the co-efficient of variation \subref{subfigure:CVtau_R} of the lifetime $R$-occupancy time $\tau_R$ for different initial states. We compare the three autonomous scenarios of constant conditions with the time varying conditions described in Figure \ref{fig:years}.}
\label{fig:tau_R}
\end{figure}  

\begin{figure}[h]
\begin{center} 
\subfigure[]{
\includegraphics[width=0.47\textwidth]{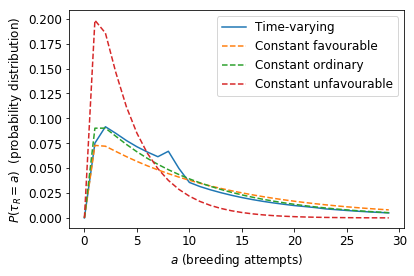}
\label{subfigure:Ptau_R}
}
\subfigure[]{
\includegraphics[width=0.47\textwidth]{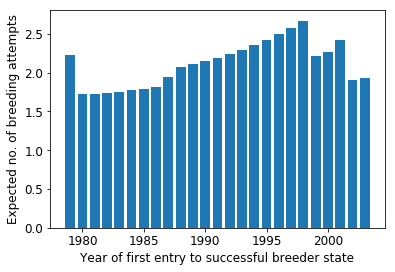}
\label{subfigure:birthdate}
}
\end{center}
\caption{Panel \subref{subfigure:Ptau_R} shows the full probability distribution $a\mapsto\mathbb{P}\{\tau_R=a\}$ of the lifetime $R$-occupancy time for individuals initialised in the successful breeder state. We compare the distribution for the time-dependent scenario shown in Figure \ref{fig:years} with constant conditions. Panel \subref{subfigure:birthdate} shows the expected value of the lifetime $R$-occupancy time as a function of the year they first entered the successful breeder state.}
\label{fig:tau_R2}
\end{figure}  

We conclude that for the time-dependent scenario shown in Figure~\ref{fig:years}, the statistical properties of the lifetime $R$-occupancy time is quite similar to the scenario where all the years are ordinary. The effects of the occasional favourable and unfavourable years balance each other out, but such a conclusion would not have been possible using a purely time-independent analysis of a homogeneous Markov chain.

\subsection{Randomly varying external environments} \label{subsecrand}

The formulas presented in the Section~\ref{sectheory} are valid for an arbitrary sequence of time-dependent transition matrices $B=\{B(n)\}_{n\in\mathbb{Z}}$. A particularly relevant source of such time-dependence arises from a Markov chain in a randomly changing environment. In this subsection, we formulate this mathematically and apply it to the Southern Fulmar example. 

A simple way to generate a random sequence of transition matrices is as follows. Suppose we have three numbers $P_f$, $P_u$, $P_o\in [0,1]$  describing the probability of a favourable, unfavourable and ordinary year respectively. We clearly must have
\begin{equation*}
    P_f+P_u + P_o=1.
\end{equation*}
We can generate a random sequence of transition matrices by picking the matrices $U_f$, $U_u$, $U_o$ according to the probabilities $P_f$, $P_u$, $P_o$ independently at each time-step. We  use $\mu$ to denote the probability measure on the set of sequences of matrices which is described above. For example we have 
\begin{equation}
    \mu\{B(0)=U_o,\, B(1)=U_f,\, B(2)=U_o\} = P_oP_fP_o.
\end{equation}
We note that there are now two separate sources of randomness contributing to lifetime $R$-occupancy time
\begin{enumerate}
    \item The individual level randomness from the Markov chain. Different individuals born at the same time take different demographic paths due to this level of randomness. We use $\mathbb{E}_{\mathbb{P}_B}$ to denote expectations taken over only this level of randomness. Here we emphasise the dependence on the specific sequence of matrices $B$. 
    \item The population level randomness from the varying conditions. All individual in the population at a given time experience the same randomly changing conditions. We use $\mathbb{E}_{\mu}$ to denote expectation taken over only this level of randomness. 
\end{enumerate}

In Figure \ref{fig:rand} we show how the expected value and the co-efficient of variation of the lifetime R-occupancy time depend on the probabilities $P_f$, $P_u$ and $P_o$ of the three different ice conditions. 

\begin{figure}[h]
\begin{center} 
\subfigure[]{
\includegraphics[width=0.47\textwidth]{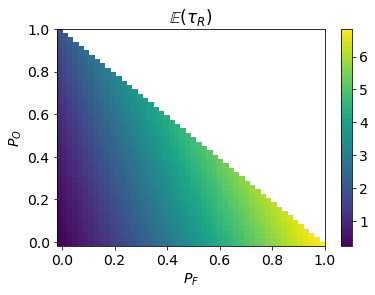}
\label{subfigure:EE}
}
\subfigure[]{
\includegraphics[width=0.47\textwidth]{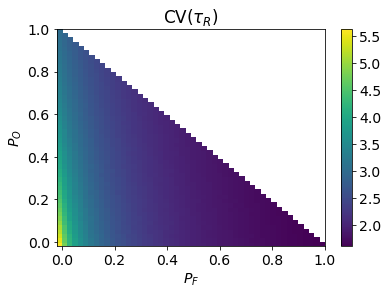}
\label{subfigure:CV}
}
\end{center}
\caption{Shows the expected value and the co-efficient of variation of the lifetime $R$-occupancy time as a function of the probabilities $(P_f, P_o, P_u)$. Here the expectation are taken over both the individual level randomness and the population level. For each $(P_f, P_o, P_u)$ we sample 2000 random sequences of matrices. For each random sequence of transition matrices we use the formulas in the previous section to calculate the path-wise expected value and variance. Then we average over the 2000 samples to approximate the statistics. Individuals are initialised in the pre-breeder state. It is helpful in understanding this figure to note that the three corners of the triangle correspond to constant deterministic conditions as shown in Figure \ref{fig:tau_R}. }
\label{fig:rand}
\end{figure}  

It is interesting to consider how much each level of randomness contributes to the uncertainty expressed by the co-efficient of variation shown Figure \ref{fig:rand}\subref{subfigure:CV}.  
We can decompose the variance to show the contribution of the two sources of randomness and obtain
\begin{align*}
    \mathrm{Var}(\tau_{ R})&=\Exp_{\mu}\Exp_{\Prob_B}\Big[(\tau_{ R}-\Exp_{\mu}\Exp_{\Prob_B}(\tau_{ R}))^2\Big] \\
    &= \Exp_{\mu}\Big[ \Exp_{\Prob_B}(\tau_{ R}^2)-2\Exp_{\Prob_B}(\tau_{ R})\Exp_{\mu}\Exp_{\Prob_B}(\tau_{ R})+\Exp_{\mu}\Exp_{\Prob_B}(\tau_{ R})^2\Big] \\
    &= \Exp_{\mu}\mathrm{Var}_{\Prob_B}(\tau_{ R}) + \mathrm{Var}_{\mu}\Exp_{\Prob_B}(\tau_{ R})\,.
\end{align*}
The total variance is the sum of the expected value of the individual level variance and the variance of the individual level expected value. In Figure~\ref{fig:rand_contribution}, we see that, for our example, most of the variance is explained by the individual level randomness rather than the population level.

\begin{figure}[h]
\begin{center} 
\subfigure[]{
\includegraphics[width=0.47\textwidth]{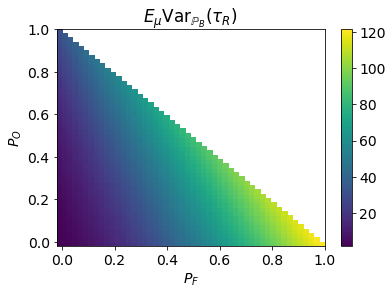}
}
\subfigure[]{
\includegraphics[width=0.47\textwidth]{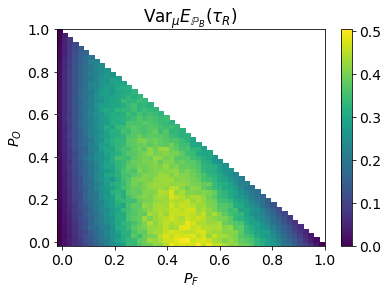}
}
\end{center}
\caption{Shows the contributions of the two sources of randomness in the lifetime $R$-occupancy time. The variance in the lifetime-$R$ occupancy times which comes from the randomly changing environment (shown on the right) is much smaller than the variance contributed on the individual level (shown on the left). This can be interpreted as meaning the breeding patterns of the birds are quite resilient to the environmental fluctuations.}
\label{fig:rand_contribution}
\end{figure}  

\section{Conclusions}

We have presented extensions to the analysis of occupancy times for states in a compartmental model such as a stage structured demographic model from the case of constant transition rates through time previously analyzed by Roth and Caswell \cite{Roth2018} to the important case of time varying transition rates.  For demographic models, understanding how the amount of time individuals will spend in different states as environmental conditions change, either randomly or on a longer term from global change, is an important question in conservation biology, as exemplified by the demography of the California condor \cite{Meretsky2000}.  We have illustrated this with specific calculations showing how random variations in the environment affect the expected time spent as a breeder in Southern Fulmar populations.  Using these ideas in an applied context does bring up questions about the meaning of occupancy time.

We  would like to emphasise that there are a number of interpretations of occupancy times for time-dependent compartmental processes, and this has been explored only very recently in the literature \cite{Sierra_17_1,Metzler_20_1}. Firstly, it makes a difference whether one considers only a single individual organism or particle, or whether one is interested in dynamical processes that involve a lot of organisms or particles. In this article, we focused on single organisms or particles, and we used a stochastic model. When considering many organisms or particles, the law of large numbers can be used to derive a deterministic model describing the evolution of mass accurately. In our context, the deterministic evolution is given by the difference equation $x_{n+1}=B(n)x_n$ and solved using the transition operator from \eqref{determinsticevolution}, via $\Phi(n,m)v$, where $v$ is a probability vector describing the initial mass distribution. The occupancy time $\tau_R$ we have developed here carries over naturally to that setting, and the distribution of $\tau_R$ has the same interpretation, meaning that in particular Theorem~\ref{theo1} applies for $k=1$, giving the expected occupancy time. We note that higher-order moments for $k>1$ do not have a meaning in this large particle limit.

For large particle systems, a different analysis is possible for time-dependent compartmental processes, in either discrete or continuous time, that also have inputs. These are modelled in the linear case either as a nonautonomous linear difference equation $x_{n+1}=B(n)x_n + s_n$ or differential equation $\dot x = B(t)x +s(t)$. The theory developed here also applies to this setting with the interpretation that the initial mass probability distribution $v$ is determined by the input vector $s_n$ or $s(t)$ at time $n$ or $t$, respectively. This yields a (time-dependent) occupancy time describing this quantity for all particles that enter the system at a particular time, but a different perspective is possible here and was first explored in \cite{Rasmussen_16_1}. Instead of considering the future of all particles that come into the system at some time $n$ or $t$, one can also look at for how long particles leaving the system at a particular time have spent in a certain subset, which essentially covers the past of the system. We note that for autonomous systems at equilibrium, these two approaches are the same.  These two approaches  yield different answers not only for  nonautonomous systems, but also for autonomous systems that have not yet reached equilibrium.

As an example of where these different definitions of occupancy times for subsets of the states could be important in understanding dynamics, a discrete time version of the carbon cycle models considered in \cite{Rasmussen_16_1} is illustrative.  These models look at the expected time that carbon is in various pools.  The approaches we have outlined in this paper could be used for example, to determine the expected time a molecule spends in the litter (as opposed to soil or vegetation), for example, while it is in the terrestrial pool, before leaving the terrestrial pool. 

\bigskip

\textbf{Acknowledgements.}
Alan Hastings was supported by the US NSF Grant DMS-1817124. Martin Rasmussen was supported by funding from the European Union’s Horizon 2020 research and innovation programme for the ITN CRITICS under Grant Agreement Number 643073. George Chappelle's work was funded through the EPSRC CDT in Mathematics for Planet Earth.

\bibliographystyle{amsplain}
\bibliography{references}

\end{document}